\newtheorem{thm}{Theorem}
\newtheorem{defn}[thm]{Definition}
\newtheorem{lem}[thm]{Lemma}
\newtheorem{cor}[thm]{Corollary}
\newtheorem{conj}[thm]{Conjecture}
\theoremstyle{remark}
\renewcommand{\epsilon}{\varepsilon}
\renewcommand{\geq}{\geqslant}
\title{Relative K-stability of extremal metrics}
\author{J. Stoppa and G. Sz\'ekelyhidi\textsuperscript{\dag}}
\thanks{\dag\,Partially supported by NSF grant DMS-0904223}
\address{Department of Pure Mathematics and Mathematical Statistics, University of Cambridge, Wilberforce Road, Cambridge CB3 0WB, UK}
\email{J.Stoppa@dpmms.cam.ac.uk}
\address{Columbia University, Department of Mathematics, 2990 Broadway
New York, NY 10027, USA}
\email{gabor@math.columbia.edu}
\begin{document}

\begin{abstract}
	We show that if a polarised manifold admits an extremal metric
	then it is K-polystable relative to a maximal torus of
	automorphisms. 
\end{abstract}

\maketitle

\section{Introduction}
Calabi~\cite{Cal82}
introduced the notion of extremal metrics as candidates for
canonical
representatives of K\"ahler classes on compact K\"ahler manifolds.
Unfortunately not all K\"ahler manifolds admit extremal metrics (eg.
Levine~\cite{Lev}) and even if they do, they may not admit them in
all K\"ahler classes (see eg.
Apostolov, Calderbank, Gauduchon, T\o{}nnesen-Friedman~\cite{ACGT3}). 
This makes the question of existence of
extremal metrics quite delicate and there is now a vast literature
on the topic.  We refer to Phong-Sturm \cite{PS} 
for a recent survey and an extensive bibliography.

By definition an extremal metric is a K\"ahler metric whose scalar
curvature has holomorphic gradient vector field. Thus, special cases are
constant scalar curvature K\"ahler (or cscK) metrics and K\"ahler-Einstein
metrics. While one can study these metrics in arbitrary K\"ahler
classes, perhaps the most interesting case is when the K\"ahler class is
the first Chern class of an ample line bundle. Indeed, existence
of a cscK metric on a manifold $M$ in the K\"ahler class $c_1(L)$ for
an ample line bundle $L$, is expected to be closely related to
algebro-geometric properties of the polarised manifold $(M,L)$. This is
expressed by the following.

\begin{conj}[Yau~\cite{Yau93}, Tian~\cite{Tian97}, Donaldson~\cite{Don02}]
	\label{conj:1}
	The manifold $M$ admits a
	cscK metric in the class $c_1(L)$ if and only if the pair
	$(M,L)$ is K-polystable.
\end{conj}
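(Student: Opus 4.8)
The plan is to establish the two implications separately, since they rest on completely different techniques. For the ``only if'' direction---that the existence of a cscK metric forces $(M,L)$ to be K-polystable---the strategy I would follow is the one underlying the present paper: express the Donaldson--Futaki invariant of a test configuration as the asymptotic slope of the Mabuchi K-energy functional $\mathcal{M}$ along an associated ray of K\"ahler potentials. The key analytic inputs are the convexity of $\mathcal{M}$ along geodesics in the space of K\"ahler metrics (in the sense of Mabuchi, Semmes and Donaldson, with the regularity supplied by Chen and by Berman--Berndtsson), together with the fact that a cscK metric is not merely a critical point but a minimiser of $\mathcal{M}$. Given a test configuration, one constructs a weak geodesic ray emanating from the cscK metric and approaching the central fibre; convexity of $\mathcal{M}$ along this ray, combined with the lower bound coming from the minimiser, yields nonnegativity of the slope, which is precisely the Donaldson--Futaki invariant. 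This gives K-semistability.

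Upgrading semistability to \emph{polystability} requires analysing the equality case. I would argue that vanishing of the Donaldson--Futaki invariant forces $\mathcal{M}$ to be affine along the ray, and that affineness in turn forces the central fibre to arise from a holomorphic one-parameter subgroup of $\mathrm{Aut}(M,L)$, so that the test configuration is a product. In the relative and extremal setting that is the actual subject of this paper one works instead with the modified K-energy relative to a maximal torus $T$ of automorphisms and with the relative Donaldson--Futaki invariant, but the logical structure is identical: the extremal metric minimises the modified K-energy, so its existence forces the relative invariant to be nonnegative, with equality only along product configurations induced by $T$.

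The genuinely hard direction, and the one that remains open in the general cscK case, is the converse: K-polystability should \emph{imply} the existence of a cscK metric. Here I would run a continuity method---for instance Chen's continuity path interpolating between a fixed background metric and the cscK equation, or the Aubin path in the Fano case---and argue by contradiction. If no cscK metric exists, the path degenerates at some parameter, and the decisive step is to extract algebraic information from this degeneration. Following the Gromov--Hausdorff strategy of Tian and of Chen--Donaldson--Sun, one takes limits of the degenerating metrics and must control the singularities of the limit space. The essential technical ingredient is the \emph{partial $C^0$ estimate}: a uniform lower bound on the Bergman density function $\rho_k$ along the family, which permits a uniform projective embedding of the degenerating manifolds and realises the limit as a genuine algebraic variety. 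From such an algebraic limit one manufactures a test configuration whose Donaldson--Futaki invariant is nonpositive, contradicting K-polystability unless the limit is biholomorphic to $(M,L)$ and already carries a cscK metric.

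I expect the main obstacle to be exactly this partial $C^0$ estimate together with the regularity theory for the limit spaces. In the Fano case it rests on the Cheeger--Colding--Tian structure theory for spaces with Ricci curvature bounds, which is simply unavailable for general polarisations where no Ricci control is present; obtaining the partial $C^0$ estimate, and hence the algebraicity of the Gromov--Hausdorff limit, from the bounded-scalar-curvature and K-energy hypotheses alone is the principal open problem standing between the semistability argument above and a complete proof of the conjecture.
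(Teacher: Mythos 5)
The statement you are asked about is Conjecture~\ref{conj:1}, the Yau--Tian--Donaldson conjecture, and the first thing to say is that the paper does not prove it --- it is stated precisely as a conjecture, both directions of which were open at the time (the ``if'' direction remains open for general polarisations). So no proposal can be checked against ``the paper's own proof''; what can be checked is whether your sketch closes the gaps, and it does not. Your own text concedes this for the ``if'' direction: you identify the partial $C^0$ estimate and the structure theory for Gromov--Hausdorff limits without Ricci bounds as missing, which is exactly the unsolved problem, so that half is a research program rather than a proof. For the ``only if'' direction your sketch also has a genuine gap at the polystability step: deducing from vanishing of the Donaldson--Futaki invariant that the K-energy is affine along the associated weak geodesic ray, and from affineness that the test configuration is a product, is not a routine consequence of convexity --- regularity of weak geodesics is too poor to run this argument directly, and making it work required substantial later developments (Berman--Berndtsson convexity, Berman--Darvas--Lu); you cannot simply assert it.

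Separately, your claim that the K-energy/geodesic strategy is ``the one underlying the present paper'' is wrong, and the distinction matters. The route actually taken here (following Stoppa~\cite{Sto08}) is entirely different: K-semistability comes from Donaldson's lower bound for the Calabi functional \cite{Don05} (inequality~(\ref{eq:skd}), adapted to the extremal case in Theorem~\ref{thm:semistab}), not from convexity of $\mathcal{M}$; and the equality case is handled not by an affineness analysis but by a blowup argument: given a non-product test configuration with vanishing (relative) Futaki invariant, one blows up a $T$-fixed point $p$ whose $\alpha$-limit $q$ is a repulsive fixed point of the central fibre, computes the Futaki invariant of the induced test configuration on $(\mathrm{Bl}_pM,\phi^*L-\epsilon E)$ (the Lemma and Corollary of Section~3), finds it strictly negative for small $\epsilon$, and contradicts the semistability of the blowup, which admits an extremal metric by Arezzo--Pacard--Singer (Theorem~\ref{thm:APS}). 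This perturbation-plus-stability mechanism is what lets the paper handle the equality case with the technology available, precisely because the geodesic-affineness route you propose was out of reach.
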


The notion of K-polystability will be recalled below. Building on the
K-semistabi\-lity proved by Donaldson~\cite{Don05} and on the work of
Arezzo-Pacard~\cite{AP06} on blowing up cscK metrics, the first named
author completed the proof of one direction of this conjecture, under
the assumption that the automorphism 
group of $(M,L)$ is discrete.

\begin{thm}[\cite{Sto08} Theorem 1.2] If $M$ admits a cscK metric in $c_1(L)$ and
	$\mathrm{Aut}(M,L)$ is discrete, then $(M,L)$ is K-polystable.
\end{thm}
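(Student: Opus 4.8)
The plan is to combine Donaldson's K-semistability theorem \cite{Don05} with the Arezzo-Pacard gluing construction \cite{AP06}, deriving a contradiction from the existence of a destabilising test configuration by passing to a blow-up. First I would record the reduction. Since $M$ carries a cscK metric in $c_1(L)$, Donaldson's theorem gives that $(M,L)$ is K-semistable, so every test configuration $(\mathcal{X},\mathcal{L})$ satisfies $\mathrm{DF}(\mathcal{X},\mathcal{L})\geq 0$, where $\mathrm{DF}$ denotes the Donaldson-Futaki invariant. To upgrade this to K-polystability it suffices, by definition, to show that $\mathrm{DF}(\mathcal{X},\mathcal{L})=0$ forces $(\mathcal{X},\mathcal{L})$ to be a product. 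Arguing by contradiction, I would assume there is a non-product test configuration with $\mathrm{DF}(\mathcal{X},\mathcal{L})\leq 0$; by semistability $\mathrm{DF}(\mathcal{X},\mathcal{L})=0$, and the goal becomes the construction of a single test configuration with strictly negative Donaldson-Futaki invariant on a space known to be K-semistable.

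Next I would produce that semistable space by blowing up. Because $\mathrm{Aut}(M,L)$ is discrete, $M$ carries no nonzero holomorphic vector fields, so the linearised operator in the Arezzo-Pacard construction is invertible and \cite{AP06} applies: for every point $p\in M$ and every sufficiently large $k$, the blow-up $\mathrm{Bl}_p M$ admits a cscK metric in the class $c_1(L_k)$, where $L_k=\pi^*L^{\otimes k}\otimes\mathcal{O}(-E)$ and $E$ is the exceptional divisor. Applying Donaldson's theorem once more, $(\mathrm{Bl}_p M, L_k)$ is K-semistable for each large $k$. This is the source of the inequality I intend to violate.

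The heart of the argument is to lift the destabilising configuration to the blow-up. Given $p\in M$, I would take the closure $\overline{\mathbb{C}^*\cdot p}\subset\mathcal{X}$ of its orbit, an equivariant section over $\mathbb{C}$ meeting the central fibre in the limit point $\hat p=\lim_{t\to 0}t\cdot p$, and blow up $\mathcal{X}$ along this section to obtain a test configuration $(\widehat{\mathcal{X}},\widehat{\mathcal{L}}_k)$ for $(\mathrm{Bl}_p M, L_k)$ with $\widehat{\mathcal{L}}_k=\mathcal{L}^{\otimes k}\otimes\mathcal{O}(-\mathcal{E})$. The key computation, modelled on the deformation-to-the-normal-cone estimates of Ross-Thomas, is an asymptotic expansion of the form
\[
\mathrm{DF}(\widehat{\mathcal{X}},\widehat{\mathcal{L}}_k)= a\,k^{n-1}\,\mathrm{DF}(\mathcal{X},\mathcal{L})\;-\;b\,k^{n-2}\,w(\hat p)\;+\;O(k^{n-3}),
\]
with $n=\dim M$, constants $a,b>0$, and $w(\hat p)$ the suitably normalised weight of the induced $\mathbb{C}^*$-action on the fibre $\mathcal{L}_0|_{\hat p}$. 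Since the leading term vanishes by our assumption $\mathrm{DF}(\mathcal{X},\mathcal{L})=0$, it remains to choose $p$ so that the correction term is strictly negative, i.e.\ $w(\hat p)>0$: then $\mathrm{DF}(\widehat{\mathcal{X}},\widehat{\mathcal{L}}_k)<0$ for all large $k$, contradicting the K-semistability established above. The existence of such a point is exactly where the non-product hypothesis enters, since it forces the weight function on the central fibre to be non-constant, and an averaging or maximisation argument over the attracting fixed points then yields a limit point with $w(\hat p)>0$.

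I expect the main obstacle to be the asymptotic expansion itself: one must control the two-variable asymptotics (in the Hilbert-polynomial degree and in the blow-up parameter $k$) of the dimensions and total weights of the spaces of sections of the blown-up central fibre, and extract the subleading coefficient together with its sign in terms of $w(\hat p)$. Secondary technical points are verifying that $\widehat{\mathcal{X}}$ is a genuine flat test configuration with the claimed polarisation, that $p$ may be taken to be a smooth point of $M$ so that \cite{AP06} applies, and that the normalisation of $w(\hat p)$ is precisely the one making the positivity argument in the last step valid; these I would treat as routine once the expansion is in hand.
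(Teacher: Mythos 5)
Your overall strategy is exactly the one used in \cite{Sto08} and generalised in Section 3 of this paper: Donaldson's semistability, the Arezzo--Pacard blow-up, lifting the test-configuration by blowing up the orbit closure of a point, and an expansion of the Futaki invariant whose leading term is $F(\mathcal{X},\mathcal{L})$ and whose correction term is controlled by the weight at the limit point. (The precise orders in your ansatz differ from the paper's, which finds the correction at relative order $\epsilon^{n-1}$ rather than $\epsilon$, but since the leading term vanishes only the sign of the first nonzero correction matters, so this is a matter of normalisation.)

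There is, however, one genuine gap. You claim that the non-product hypothesis ``forces the weight function on the central fibre to be non-constant,'' and you rely on this to produce a point $p$ whose limit $q$ has the weight sign you need. This is false as stated: a test-configuration can be non-trivial while the induced $\mathbf{C}^*$-action is trivial on $\mathrm{red}(M_0)$, for instance when the central fibre is non-reduced and the action lives only on nilpotents. In that case the weight function on the reduced central fibre is constant, there is no limit point with positive normalised weight, and the blow-up construction produces nothing. The paper isolates this as the case $l=0$ in the stratification $\mathrm{red}(M_0)\subset\mathbf{P}\big(\bigoplus_{i\leqslant l}V_{m_i}\big)$ and disposes of it by a \emph{separate} argument (cited from \cite{Sto08}, Section 3): if $\alpha$ acts trivially on $\mathrm{red}(M_0)$ then either $\mathcal{X}$ is a product or $F(M_0,L_0,\alpha)>0$ outright. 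Your proof needs this case, and it cannot be absorbed into the blow-up machinery. Two further points you wave at as routine are in fact where the real work lies: first, the positivity of the correction term is obtained not by averaging but by sending $p$ into the stratum of highest weight $m_l$, so that $q$ is a repulsive fixed point, and then proving $\lambda(q)<b_0/a_0$ either via the Hilbert--Mumford criterion or via the exact sequence $0\to\mathcal{I}_r^{k\epsilon}L_0^k\to L_0^k\to\mathcal{O}_{k\epsilon r}\otimes L_0^k|_r\to 0$ together with positivity of the Hilbert--Samuel multiplicity; second, the central fibre of $\widehat{\mathcal{X}}$ is in general not $\mathrm{Bl}_q M_0$ but acquires an extra component $P$ glued along the exceptional divisor when $q$ is singular in $M_0$, and the weight of the sections supported on $P$ must be tracked explicitly in the expansion.
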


Using a different approach, this was recently extended to manifolds with
not necessarily discrete automorphism groups by Mabuchi~\cite{Mab1},
\cite{Mab2}.
The aim of the
present paper is to generalise this theorem to the case of extremal
metrics. In this case the conjecture analogous to Conjecture
\ref{conj:1} was formulated by the second named author in \cite{GSz04}.
\begin{conj}\label{conj:2}
 The manifold $M$ admits an extremal metric in the class
	$c_1(L)$ if and only if the pair $(M,L)$ is K-polystable
	relative to a maximal torus of automorphisms of $(M,L)$.
\end{conj}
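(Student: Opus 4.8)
The plan is to prove the direction of Conjecture~\ref{conj:2} stated in the abstract: that an extremal metric in $c_1(L)$ forces $(M,L)$ to be K-polystable relative to a maximal torus. First I would fix a maximal torus $T\subset\mathrm{Aut}(M,L)$ whose Lie algebra contains the extremal holomorphic vector field; this is possible since that field generates a torus of automorphisms, which may be enlarged to a maximal one. For a $T$-equivariant test configuration $\mathcal{X}$ I would use the relative Donaldson--Futaki invariant $DF_T(\mathcal{X})$, obtained from the ordinary invariant by removing its orthogonal projection onto the weights induced by $T$, the projection being taken with respect to the Futaki--Mabuchi bilinear form. Relative K-polystability then amounts to $DF_T(\mathcal{X})\geq 0$ for all such $\mathcal{X}$, with equality only for products.

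The first step is relative K-\emph{semistability}, i.e.\ $DF_T(\mathcal{X})\geq 0$. Since the extremal vector field lies in $T$, the scalar curvature of the extremal metric differs from its $T$-equivariant holomorphy potential by a constant, so the reduced scalar curvature is constant and the metric is ``relatively cscK''. I would then invoke the relative form of Donaldson's lower bound for the Calabi functional, which bounds $-DF_T(\mathcal{X})$ above by a positive multiple of the $L^2$-norm of the variable part of the reduced scalar curvature. As this norm vanishes for the extremal metric, the bound gives $DF_T(\mathcal{X})\geq 0$ for every $T$-equivariant $\mathcal{X}$.

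The substantial step is to upgrade this to polystability by excluding non-product configurations with $DF_T(\mathcal{X})=0$, and here I would adapt the blow-up strategy of \cite{Sto08}. Arguing by contradiction, suppose such a destabilising $\mathcal{X}$ exists. I would blow up $M$ along a $T$-invariant configuration of points chosen so that a maximal torus survives on $\hat M=\mathrm{Bl}(M)$, the extremal field extends, and by the theorem of Arezzo--Pacard--Singer the polarisation $L-\epsilon E$ carries an extremal metric for all small $\epsilon>0$; by the first step $\hat M$ is then relatively K-semistable. The configuration $\mathcal{X}$ lifts to a $T$-equivariant test configuration $\hat{\mathcal{X}}$ on $\hat M$, and the transported form of Stoppa's computation should yield
\[
 DF_T(\hat{\mathcal{X}}) = DF_T(\mathcal{X}) - \epsilon\,a(\mathcal{X}) + O(\epsilon^2),
\]
with $a(\mathcal{X})>0$ whenever $\mathcal{X}$ is non-trivial. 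Since $DF_T(\mathcal{X})=0$, this makes $DF_T(\hat{\mathcal{X}})<0$ for small $\epsilon$, contradicting the semistability of $\hat M$, so no destabilising $\mathcal{X}$ exists.

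The main obstacle is precisely the interaction of the blow-up with the relative structure, which is new compared with the discrete-automorphism case of \cite{Sto08}. One must choose the blown-up points compatibly with $T$ so that both the torus and the extremal field persist, as required by Arezzo--Pacard--Singer, while at the same time keeping the lift $\hat{\mathcal{X}}$ equivariant and controlling how the Futaki--Mabuchi projection defining $DF_T$ varies as the polarisation degenerates with $\epsilon$; note in particular that a maximal torus of $\mathrm{Aut}(\hat M)$ is generally smaller than $T$, so one must check that $\hat{\mathcal{X}}$ remains destabilising relative to the correct torus. Verifying the strict negativity of the $\epsilon$-linear term for the \emph{relative} invariant, and showing that the equality case $DF_T(\mathcal{X})=0$ genuinely forces $\mathcal{X}$ to be a product rather than merely a configuration of vanishing reduced invariant, is where I expect the real difficulty to lie.
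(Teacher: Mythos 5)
Your strategy coincides with the paper's: relative semistability from Donaldson's lower bound for the Calabi functional, then exclusion of a non-product configuration with vanishing relative invariant by blowing up a $T$-fixed point, applying Arezzo--Pacard--Singer to get an extremal metric on the blow-up, and deriving a contradiction from a perturbative formula for the (relative) Futaki invariant of the lifted configuration. The outline and the list of difficulties you give are accurate. (One small remark on step one: there is no off-the-shelf ``relative form'' of Donaldson's bound; the paper derives the relative statement from the absolute one by twisting $\alpha$ to be orthogonal to $T$ and testing the bound against $\mu\alpha-\chi$, using $F(\chi)=\Vert\chi\Vert^2$.)

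The genuine gap is at the crux, where you write that the transported computation ``should yield'' a correction term $-\epsilon\,a(\mathcal{X})$ with $a(\mathcal{X})>0$ for any non-trivial $\mathcal{X}$. That positivity is false for an arbitrary $T$-fixed point and is exactly where the work lies: the sign of the leading correction, which is $\bigl(\lambda(q)-\tfrac{b_0}{a_0}\bigr)\tfrac{\epsilon^{n-1}}{2(n-2)!}$ (order $\epsilon^{n-1}$, not $\epsilon$), depends on the weight $\lambda(q)$ of $\alpha$ on the fibre of $L_0$ over the limit point $q=\lim_{t\to0}\alpha(t)p$. The paper makes it negative by choosing $p$ inside the closed $T$-invariant set $M'=M\cap\mathbf{P}\bigl(\bigoplus_{i\geq l}V_{m_i}\bigr)$, so that $q$ is a \emph{repulsive} fixed point of $\alpha$ in $\mathrm{red}(M_0)$; a Borel-type argument produces a $T$-fixed such $p$, and a weight comparison (equivalently the Hilbert--Mumford weight of $q$) gives $\lambda(q)<b_0/a_0$. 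Without specifying this choice your $\epsilon$-linear term has no definite sign. Two further points your plan omits: the degenerate case in which $\alpha$ acts trivially on $\mathrm{red}(M_0)$, where no repulsive direction exists and one must instead argue as in \cite{Sto08} Section 3 that either $\mathcal{X}$ is a product or $F>0$; and the fact that the central fibre of $\widehat{\mathcal{X}}$ need not be $\mathrm{Bl}_q M_0$ but can acquire an extra component glued along the exceptional divisor, which the weight computation (and the estimate $\langle\hat\alpha,\hat T\rangle=O(\epsilon^n)$ ensuring the relative and absolute corrections agree to leading order) must take into account.
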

By generalising the approach in \cite{Sto08} we obtain the following, which is the
main result of this paper.

\begin{thm}\label{thm:main}
	If $M$ admits an extremal metric in $c_1(L)$ then $(M,L)$ is
	K-polystable relative to a maximal torus of automorphisms of
	$(M,L)$. 
\end{thm}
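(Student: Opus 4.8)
The plan is to run the argument of \cite{Sto08}, systematically replacing each ingredient by its torus-relative counterpart. First I would fix the relative framework: choose a maximal torus $T\subset\mathrm{Aut}(M,L)$ whose Lie algebra $\mathfrak{t}$ contains the extremal vector field $\chi$, recall the Futaki--Mabuchi type inner product $\langle\,\cdot\,,\,\cdot\,\rangle$ on the space of $T$-equivariant degenerations, and recall from \cite{GSz04} the relative Donaldson--Futaki invariant $F_\chi(\alpha)=F(\alpha)-\langle\alpha,\chi\rangle$. Here $\chi$ is characterised by $\langle\chi,\beta\rangle=F(\beta)$ for all $\beta\in\mathfrak{t}$, so that $F_\chi$ vanishes precisely on the product configurations induced by one-parameter subgroups of $T$. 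Relative K-polystability is then the assertion that $F_\chi\geq0$ on every $T$-equivariant test configuration, with equality only for such products.

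I would next record relative K-\emph{semistability}: the existence of an extremal metric forces $F_\chi(\alpha)\geq0$ for all $T$-equivariant $\alpha$. This is the relative analogue of Donaldson's lower bound \cite{Don05}, obtained by bounding $F_\chi$ below by a multiple of the relative Calabi functional and using that the extremal metric minimises the latter. The remaining, and essential, point is strictness. So suppose $(M,L)$ were not relatively K-polystable: there would be a $T$-equivariant test configuration $\mathcal{X}$, \emph{not} a product, with $F_\chi(\mathcal{X})=0$, and I must reach a contradiction.

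To do so I would blow up, following \cite{Sto08}. Let $\lambda$ be the $\mathbb{C}^*$-action on the central fibre of $\mathcal{X}$, commuting with $T$, and for $p\in M$ write $p_0=\lim_{t\to0}\lambda(t)\cdot p$ for its limit in the central fibre. Blowing up $M$ along a $T$-invariant, suitably balanced configuration containing $p$, and blowing up $\mathcal{X}$ along the closures of the corresponding orbits, produces a blow-up $\pi\colon\widehat{M}\to M$ with polarisation $L_\epsilon=\pi^*L-\epsilon E$, a maximal torus $\widehat{T}$, and an induced $\widehat{T}$-equivariant test configuration $\widehat{\mathcal{X}}$. Two facts are then assembled. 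First, by the extremal analogue of Arezzo--Pacard \cite{AP06}, if the configuration meets the appropriate moment-map balancing condition relative to $T^{\mathbb{C}}$, then $\widehat{M}$ carries an extremal metric in $c_1(L_\epsilon)$ for all small $\epsilon>0$, hence is relatively K-semistable by the previous paragraph. Second, controlling the $\epsilon$-expansions of $F(\widehat{\mathcal{X}})$ and of $\langle\widehat{\mathcal{X}},\widehat\chi\rangle$ (in particular that $\widehat\chi\to\chi$ as $\epsilon\to0$) should give
\[
	F_{\widehat\chi}(\widehat{\mathcal{X}},L_\epsilon)=F_\chi(\mathcal{X},L)-c_n\,\epsilon^{\,n-1}\big(w^T_\lambda(p_0)-\overline{w}\big)+O(\epsilon^{\,n}),\qquad c_n>0,
\]
where $w^T_\lambda(p_0)$ is the weight of $\lambda$ on $L_{p_0}$ taken relative to $T$ (the $\mathfrak{t}$-component projected out) and $\overline{w}$ its average. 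Since $F_\chi(\mathcal{X})=0$ while $\mathcal{X}$ is not a product, the relative weight $p\mapsto w^T_\lambda(p_0)$ is non-constant; selecting the blow-up so that it exceeds $\overline{w}$ renders the right-hand side negative for small $\epsilon$, contradicting the relative K-semistability of $\widehat{M}$.

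The hard part will be the choice of blow-up locus, where two demands compete. The points must (a) satisfy the balancing condition of the extremal gluing theorem, so that $\widehat{M}$ genuinely admits an extremal metric, and (b) be placed so that the relative weight term above is strictly positive. In the discrete-automorphism cscK setting of \cite{Sto08} requirement (a) is essentially vacuous and a generic point suffices; here the torus forces one to work with $T$-invariant configurations and with a relative, moment-map version of Chow stability, and the crux is to prove that the family of balanced configurations is rich enough to contain one fulfilling (b). Establishing the $\epsilon$-expansion above with leading coefficient equal to the relative weight, and in particular the convergence $\widehat\chi\to\chi$ with its first-order correction, is the principal computation underlying the whole scheme.
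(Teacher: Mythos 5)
Your overall strategy --- relative semistability from Donaldson's lower bound on the Calabi functional, then a blow-up argument following \cite{Sto08} to rule out a non-product configuration with vanishing relative Futaki invariant --- is the same as the paper's. But the step you single out as ``the hard part'' (reconciling the balancing condition of the extremal gluing theorem with positivity of the weight term) is exactly where your proposal stalls, and the paper resolves it by a simpler device that your outline is missing. Let $M'\subset M$ be the set of points whose $\alpha$-limit lies in the repulsive fixed locus $\mathrm{red}(M_0)\cap\mathbf{P}(V_{m_l})$ of the highest weight space. This is a closed $T$-invariant subvariety, hence contains a point $p$ fixed by the whole maximal torus $T$ (flow to a fixed point of each generating $\mathbf{C}^*$-action in turn). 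One then blows up this \emph{single $T$-fixed point}. Because $p$ is fixed by a maximal torus, the Arezzo--Pacard--Singer theorem applies with no balancing condition at all: every $K$-invariant hamiltonian holomorphic vector field already lies in $\mathfrak{k}$ when $K$ is a maximal compact torus, and $J\nabla S$ is central in the Lie algebra of Killing fields by Calabi's theorem, so the hypotheses of \cite{APS} Theorem 2.1 are automatic. There is no need to prove that ``the family of balanced configurations is rich enough''; that route is genuinely harder and it is unclear it can be carried out. Simultaneously, the same choice of $p$ delivers your requirement (b): the limit $q$ is a repulsive fixed point, and the inequality $\lambda(q)<b_0/a_0$, which makes the $\epsilon^{n-1}$ correction negative, is proved via the Hilbert--Mumford weight of $q$ (or a direct estimate on the weight polynomial). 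Your proposal asks for a point where the relative weight exceeds the average without explaining why such a point can be taken $T$-invariant --- that is the whole content of the argument.

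Two smaller omissions. First, you do not treat the degenerate case in which $\alpha$ acts trivially on $\mathrm{red}(M_0)$ but the configuration is not a product (then there is no repulsive fixed point to use); the paper disposes of it by citing \cite{Sto08} Section 3, where such a configuration is shown to have strictly positive Futaki invariant. Second, the central fibre of the blown-up family need not be $\mathrm{Bl}_qM_0$: flatness forces an extra component $P$ glued along the exceptional divisor when $q$ is singular enough, and the weight expansion must account for it. Finally, note that the theorem is polystability relative to a maximal torus, not relative to the extremal field $\chi$ as your $F_\chi$ suggests; after projecting $\alpha$ orthogonally to $T$ the two invariants coincide on the configurations considered, but the stronger statement relative to $\chi$ alone is explicitly left open in the paper.
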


In particular the theorem applies when $M$ admits a cscK metric and has
continuous 
automorphisms, proving that $M$ is K-polystable with respect to all 
test- configurations that commute with a maximal torus of automorphisms,
 but note that this is a priori a weaker condition
than K-polystability (see the next section for the detailed
definitions).

Note that by an example in \cite{ACGT3} relative K-polystability may not be sufficient
to ensure the existence of an extremal metric, so it is likely that 
the conjectures \ref{conj:1} and \ref{conj:2} have to be refined. 

\subsection*{Acknowledgements}
We would like to thank Julius Ross and Richard Thomas for helpful discussions. The second named author would also like to thank D. H. Phong for his encouragement and support. 

\section{Relative K-polystability}
In this section we recall the notion of relative K-polystability
following~\cite{GSz04}. This is a modification of the notion of
K-polystability introduced by Donaldson~\cite{Don02}.

Suppose that $(V,L)$ is a polarised scheme of dimension $n$, with
a $\mathbf{C}^*$ action $\alpha$. 
Let us write $A_k$ for the
infinitesimal generator of the action of $\alpha$  
on $H^0(V,L^k)$, and write $d_k$
for the dimension of $H^0(V,L^k)$. Then $d_k$ is a polynomial of
degree $n$ and $\mathrm{Tr}(A_k)$ is a polynomial
of degree $n+1$ for sufficiently large $k$, so we can write
\[ \begin{aligned}
	d_k &= c_0k^n + c_1k^{n-1} + O(k^{n-2}),\\
	\mathrm{Tr}(A_k) &= a_0k^{n+1} + a_1k^n + O(k^{n-1}),\\
\end{aligned} \]
Donaldson's Futaki invariant is defined to be
\[ F(\alpha) = \frac{c_1}{c_0}a_0 - a_1. \]
Sometimes we will write $F(V,L,\alpha)$ to emphasize the space that
$\alpha$ is acting on.

Suppose in addition that we have a $\mathbf{C}^*$-action 
$\beta$ acting on
$(V,L)$ which commutes with $\alpha$, and write $B_k$ for the
infinitesimal generator of the action on $H^0(V,L^k)$. 
Then $\mathrm{Tr}(A_kB_k)$ is a polynomial of degree $k+2$ for
sufficiently large $k$, and we define the inner product
$\langle\alpha,\beta\rangle$ to be the leading coefficient in the
expansion
\[ \mathrm{Tr}(A_kB_k) - \frac{\mathrm{Tr}(A_k)\mathrm{Tr}(B_k)}{d_k} =
\langle \alpha,\beta\rangle k^{n+2} + O(k^{n+1}).\]
When $V$ is a smooth manifold, then this inner product can also be
computed differential geometrically. It was originally introduced in
this form by Futaki-Mabuchi~\cite{FM}.

To define the relative Futaki invariant, suppose that we have a torus
action $T$ on $(V,L)$ commuting with $\alpha$. Let us write
$\overline{\alpha}$ for the projection of $\alpha$ orthogonal to $T$,
with respect to the inner product we defined. Then we define 
the relative Futaki invariant $F_T(\alpha)$ by
\[ F_T(\alpha) = F(\overline{\alpha}).\]
Equivalently if $\beta_1,\ldots,\beta_d$ is a basis of
$\mathbf{C}^*$-actions generating the torus $T$, then 
\[ F_T(\alpha) = F(\alpha) - \sum_{i=1}^d \frac{\langle
\alpha,\beta_i\rangle}{\langle \beta_i,\beta_i\rangle} F(\beta_i).\]

It will be convenient for us to extend these definitions to 
$\mathbf{Q}$-line bundles using the relation
\[ F(V,L^r,\alpha) = r^n F(V,L,\alpha),\]
which the reader can readily verify. It will also be useful to allow
rational multiples of $\mathbf{C}^*$-actions. For this we use the relation
\[ F(V,L,r\alpha) = r F(V,L,\alpha).\]
We next recall the notion of a test-configuration from~\cite{Don02} with
the necessary modification for relative stability.

\begin{defn} 
  A \emph{test-configuration for $(X,L)$}
  consists of a $\mathbf{C}^*$-equivariant flat family of schemes
  $\pi:\mathcal{X}\to\mathbf{C}$ (where $\mathbf{C}^*$ acts on
  $\mathbf{C}$ by multiplication) and a $\mathbf{C}^*$-equivariant,
  relatively  ample
  $\mathbf{Q}$-line bundle $\mathcal{L}$ over 
  $\mathcal{X}$.  We require that the
  fibres $(\mathcal{X}_t,\mathcal{L}|_{\mathcal{X}_t})$ are isomorphic
  to $(X,L)$ for $t\not=0$, where $\mathcal{X}_t=\pi^{-1}(t)$. The
  test-configuration is called a \emph{product configuration} if
  $\mathcal{X} = X\times\mathbf{C}$. 

  We say that the test-configuration is \emph{compatible with a torus $T$
  of automorphisms of $(X,L)$}, if there is a torus action on
  $(\mathcal{X},\mathcal{L})$ which preserves the fibres of
  $\pi:\mathcal{X}\to\mathbf{C}$, commutes with the
  $\mathbf{C}^*$-action, and restricts to $T$ on
  $(\mathcal{X}_t,\mathcal{L}|_{\mathcal{X}_t})$ for $t\not=0$. 
\end{defn}

Note that given a test-configuration $(\mathcal{X},\mathcal{L})$, there
is an induced $\mathbf{C}^*$-action $\alpha$ on the central fibre
$(\mathcal{X}_0,\mathcal{L}|_{\mathcal{X}_0})$. We will write
$F(\mathcal{X},\mathcal{L})$ for the Futaki invariant of this induced
action $\alpha$.  
With these preliminaries we can state the main definition.

\begin{defn}\label{def:kstable}
  A polarised variety $(X,L)$ is \emph{K-semistable relative to a
  torus $T$ of automorphisms} if 
  $F_T(\mathcal{X},\mathcal{L})\geqslant 0$ for all
test-configurations compatible with the torus. If in addition 
equality holds only for the product configuration, then $(X,L)$ is
\emph{K-polystable} relative to the torus $T$. 
\end{defn}

If we have two tori $T'\subset T$ acting on $(X,L)$, then
K-polystability relative to $T$ is a weaker condition than relative to
$T'$, since there are fewer test-configurations compatible with a larger
torus. Thus, the weakest notion is K-polystability relative to a maximal
torus of automorphisms. The strongest notion is K-polystability relative
to the extremal $\mathbf{C}^*$-action. This is a $\mathbf{C}^*$-action
$\chi$ defined by Futaki-Mabuchi~\cite{FM} as follows. Fix a maximal
torus of automorphisms $T$, and write $\mathfrak{t}$ for its Lie
algebra. The Futaki invariant gives a linear map
$\mathfrak{t}\mapsto\mathbf{C}$, and $\chi$ is dual to this map under
the inner product on $\mathfrak{t}$. This gives a $\mathbf{C}^*$-action
on $(X,L)$, unique up to conjugation. In particular if the Futaki
invariant of any $\mathbf{C}^*$-action on $(X,L)$ vanishes, then
$\chi=0$, and K-polystability relative to $\chi$ is simply
K-polystability. 

It would be interesting to strengthen the conclusion of
Theorem~\ref{thm:main} to K-poly\-sta\-bi\-lity relative to the extremal
$\mathbf{C}^*$-action. Note that the analogous statement is true 
in finite dimensional geometric invariant theory, by Theorem 3.5 in
\cite{GSz04} (the same proof works if we replace the maximal torus with
any torus containing the extremal $\mathbf{C}^*$-action). 

We next recall the two theorems that we will use in the next
section.

\begin{thm}\label{thm:semistab}
	If $M$ admits an extremal metric in
	$c_1(L)$ then $(M,L)$ is K-semistable relative to a maximal
	torus of automorphisms.
\end{thm}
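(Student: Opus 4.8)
The plan is to deduce this from a relative version of Donaldson's lower bound on the Calabi functional \cite{Don05}, obtained by projecting all the relevant quantities orthogonally to the torus $T$. First I would recall the differential-geometric meaning of the ingredients when the central fibre is smooth: by the Futaki--Mabuchi theory \cite{FM} the inner product $\langle\alpha,\beta\rangle$ agrees with the $L^2$ inner product of the normalised Hamiltonians of the corresponding Killing fields, and the Futaki invariant $F(\alpha)$ pairs the Hamiltonian of $\alpha$ against $S(\omega)-\hat S$. In particular the orthogonal projection $\overline\alpha$ of $\alpha$ away from $T$ used in the definition of $F_T$ corresponds, on the metric side, to the $L^2$-orthogonal projection of Hamiltonians onto the orthogonal complement of the space of holomorphy potentials generating $T$.

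The main step is to prove the relative inequality
\[ F_T(\mathcal{X},\mathcal{L}) \geq -\,\|\overline\alpha\|\;\big\|\Pi\big(S(\omega)-\hat S\big)\big\|_{L^2} \]
for every $T$-invariant K\"ahler metric $\omega\in c_1(L)$ and every test-configuration $(\mathcal{X},\mathcal{L})$ compatible with $T$, where $\alpha$ is the induced $\mathbf{C}^*$-action on the central fibre, $\|\overline\alpha\|=\sqrt{\langle\overline\alpha,\overline\alpha\rangle}$, and $\Pi$ denotes the $L^2$-projection onto the orthogonal complement of the $T$-Hamiltonians. I would obtain this by running Donaldson's argument from \cite{Don05} equivariantly: one embeds the fibres by $T$-invariant bases of sections of $L^k$, relates the algebraically defined quantities $\mathrm{Tr}(A_k)$ and $\mathrm{Tr}(A_kB_k)$ to integrals over the fibres via Bergman-kernel asymptotics, and applies the Hilbert--Schmidt Cauchy--Schwarz estimate that produces Donaldson's bound --- but now to the part of the generator orthogonal to the subalgebra generating $T$, so that the $T$-directions are subtracted on both sides simultaneously. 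The equivariance is preserved throughout because the configuration commutes with $T$.

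The hard part will be this relative lower bound: one must check that the projection defining $F_T$ on the algebraic side matches the $L^2$-projection $\Pi$ on the geometric side in the large-$k$ limit, and that the error terms in the Bergman expansion remain uniform after the projection. Once this is in place the conclusion is immediate. For an extremal metric the defining condition is precisely that $S(\omega)-\hat S$ is the holomorphy potential of the extremal field, which lies in $\mathfrak{t}$ as soon as we take $T$ to be a maximal torus containing it; hence $\Pi\big(S(\omega)-\hat S\big)=0$, the right-hand side of the displayed inequality vanishes, and $F_T(\mathcal{X},\mathcal{L})\geq 0$ for all compatible test-configurations. This is exactly relative K-semistability.
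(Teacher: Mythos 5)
Your argument reduces the theorem to a ``relative lower bound''
$F_T(\mathcal{X},\mathcal{L}) \geq -\Vert\overline{\alpha}\Vert\,\Vert\Pi(S(\omega)-\hat{S})\Vert_{L^2}$,
which you explicitly leave unproved (``the hard part''). That inequality \emph{is} the theorem: once it is in hand, the extremal case is the one-line consequence you describe, so as written nothing nontrivial has actually been established. The statement you want is plausible --- a relative refinement of Donaldson's bound along exactly these lines is carried out in \cite{GSzThesis}, which is where the paper sends the reader for details --- but making it precise requires rerunning the Hilbert--Schmidt/Bergman-kernel argument of \cite{Don05} with the trace-free generators $\underline{A}_k$ further projected orthogonally to the generators of $T$ inside $\mathrm{End}\,H^0(L^k)$, and then showing that these finite-dimensional projections converge to the $L^2$-projection $\Pi$ of Hamiltonians with errors uniform in $k$. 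You name these issues but do not address them, so there is a genuine gap at the heart of the proof.

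You should also know that the paper's own argument sidesteps the relative bound entirely, using only the \emph{absolute} bound of \cite{Don05}, namely $F(\alpha)/\Vert\alpha\Vert \geq -c_n\Vert S(\omega)-\hat{S}\Vert_{L^2}$, together with the identity $c_n\Vert S(\omega)-\hat{S}\Vert_{L^2} = F(\chi)/\Vert\chi\Vert = \Vert\chi\Vert$ for an extremal metric (here $\chi$ is the extremal field and $F(\chi)=\langle\chi,\chi\rangle$ by definition). This gives $F(\beta)/\Vert\beta\Vert \geq -\Vert\chi\Vert$ for \emph{every} test-configuration. One then argues by contradiction: twist so that the induced action $\alpha$ is orthogonal to $T$ (hence to $\chi$), suppose $F(\alpha)<0$, choose $\mu>0$ with $F(\mu\alpha)=-\Vert\mu\alpha\Vert^2$, and apply the absolute bound to the test-configuration with central-fibre action $r(\mu\alpha-\chi)$ obtained by the base change $z\mapsto z^r$ and twisting by $\chi^{-1}$. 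Orthogonality gives $F(\mu\alpha-\chi) = -\Vert\mu\alpha\Vert^2-\Vert\chi\Vert^2 = -\Vert\mu\alpha-\chi\Vert^2$, so $F(\mu\alpha-\chi)/\Vert\mu\alpha-\chi\Vert = -\Vert\mu\alpha-\chi\Vert < -\Vert\chi\Vert$, contradicting the bound. This is purely algebraic once the two analytic inputs are granted, and it is the economical way to finish; if you prefer your route, you must actually prove the relative lower bound rather than assert it.
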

\begin{proof}
	This follows easily from Donaldson's lower bound for the Calabi
	functional \cite{Don05}. For details see \cite{GSzThesis}. For
	the convenience of the reader we outline the argument here.
	Donaldson's lower bound tells us that for any
	test-configuration, if $\alpha$ is the induced
	$\mathbf{C}^*$-action on the central fiber, then  
\begin{equation}\label{eq:skd}
	\inf_{\omega\in c_1(L)} c_n\Vert S(\omega) - \hat{S}\Vert_{L^2}
	\geqslant \frac{-F(\alpha)}{\Vert\alpha\Vert},
\end{equation}
where $c_n$ is a constant depending only on the dimension, 
$\Vert\alpha\Vert=\langle\alpha,\alpha\rangle^{1/2}$  using the
inner product defined above, and $\hat{S}$ is the average of the scalar
curvature $S(\omega)$. Moreover, if $\omega$ is an extremal metric, then 
\begin{equation}\label{eq:ext}
	c_n\Vert S(\omega) - \hat{S}\Vert_{L^2} = \frac{ F(\chi) }{\Vert
\chi\Vert} = \Vert\chi\Vert, 
\end{equation}
where $\chi$ is the extremal vector field on $(M,L)$. We are using here
that $F(\chi)=\langle\chi,\chi\rangle$ by definition of the extremal
vector field. It follows from (\ref{eq:skd}) and (\ref{eq:ext}) that if
$M$ admits an extremal metric in $c_1(L)$ then
\begin{equation} \label{eq:lower}
	\frac{F(\alpha)}{\Vert \alpha\Vert} \geqslant -\Vert\chi\Vert
\end{equation}
for all test-configurations.

Suppose now that $M$ admits an extremal metric in $c_1(L)$, and
we have a test-configuration for $(M,L)$ which is
compatible with a maximal torus of automorphisms $T$. Write $\alpha$ for
the induced $\mathbf{C}^*$-action on the central fiber. By twisting the
$\mathbf{C}^*$-action on the total space by the projection of $\alpha$
onto $T$ if necessary, we can assume that $\alpha$ is orthogonal to
$T$.
We want to show that $F(\alpha)\geqslant 0$. Suppose on the contrary
that $F(\alpha) < 0$, and let $\mu>0$ satisfy $F(\mu\alpha) =
-\Vert\mu\alpha\Vert^2$. By pulling back the test-configuration under a
base change $z\mapsto z^r$, and twisting the action on the total space by
the inverse of $\chi$, we obtain a test-configuration for $(M,L)$ such
that the action on the central fiber is $r(\mu\alpha-\chi)$, where $r$
is large enough to make this a genuine $\mathbf{C}^*$-action. 
From (\ref{eq:lower}) we know that 
\[  \frac{F(\mu\alpha - \chi)}{\Vert \mu\alpha-\chi\Vert} = 
\frac{F(r(\mu\alpha-\chi))}{\Vert r(\mu\alpha-\chi)\Vert} \geqslant
-\Vert\chi\Vert.\]
But at the same time
\[ F(\mu\alpha - \chi) = -\Vert\mu\alpha\Vert^2 - \Vert\chi\Vert^2
= -\Vert \mu\alpha - \chi\Vert^2,\]
since $\alpha$ is orthogonal to $\chi$. So 
\[ \frac{F(\mu\alpha - \chi)}{\Vert\mu\alpha - \chi\Vert} = -\Vert
\mu\alpha - \chi\Vert < -\Vert\chi\Vert.\]
This contradiction shows that $(M,L)$ is K-polystable relative to $T$.
The same argument also shows that $(M,L)$ is K-polystable relative to
the extremal $\mathbf{C}^*$-action.
\end{proof}

\begin{thm}[Arezzo-Pacard-Singer \cite{APS}]\label{thm:APS}
	Suppose that $M$ admits an
	extremal metric in $c_1(L)$, and let $T$ be a maximal torus of
	automorphisms of $(M,L)$. If $p\in M$ is a fixed point of
	$T$, then the blowup $\mathrm{Bl}_p M$ of $M$ at $p$ admits an
	extremal metric in the class $c_1(\pi^*L - \epsilon E)$ for
	sufficiently small $\epsilon>0$. Here $\pi$ is the blowdown map,
	and $E$ is the exceptional divisor.
\end{thm}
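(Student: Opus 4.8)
The plan is to construct the extremal metric on $\mathrm{Bl}_p M$ by a gluing and perturbation argument, in the spirit of the constant scalar curvature construction of Arezzo--Pacard~\cite{AP06}. The input is the extremal metric $\omega$ on $M$, which near the fixed point $p$ can be written in K\"ahler normal coordinates so that it is Euclidean to high order. The local model near the exceptional divisor is the Burns--Simanca metric $\eta$ on the blowup of $\mathbf{C}^n$ at the origin: a scalar-flat K\"ahler ALE metric, asymptotic to the flat metric with an explicitly understood expansion. First I would excise a small ball $B_r(p)$ and glue in a copy of $\eta$ rescaled by a factor comparable to $\epsilon$, interpolating the two K\"ahler potentials across an annular neck. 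This produces, for each small $\epsilon$, a $T$-invariant K\"ahler metric $\omega_\epsilon$ on $\mathrm{Bl}_p M$ in the class $c_1(\pi^*L-\epsilon E)$ (after matching the scaling conventions), whose scalar curvature differs from that of an extremal metric by an error bounded by a power of $\epsilon$.

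Next I would set up the nonlinear perturbation problem. A metric $\omega_\epsilon+i\partial\bar\partial\phi$ is extremal exactly when its scalar curvature is a holomorphy potential, that is, when $S(\omega_\epsilon+i\partial\bar\partial\phi)-h_\xi$ is constant for some $\xi\in\mathfrak{t}$, where $h_\xi$ denotes the Hamiltonian of $\xi$. This is a fourth-order equation for $\phi$ whose linearization at $\phi=0$ is the Lichnerowicz operator $\mathcal{L}_\epsilon=\mathcal{D}^*\mathcal{D}$, whose kernel is precisely the space of holomorphy potentials. Since the extremal field $\xi$ is itself part of the unknown, I would solve simultaneously for the pair $(\phi,\xi)$, and carry out the entire construction $T$-equivariantly so that the resulting metric automatically retains $T$ in its automorphism group.

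The analytic core is to invert $\mathcal{L}_\epsilon$ with an operator norm controlled by a (negative) power of $\epsilon$, uniformly as $\epsilon\to 0$, so that a contraction-mapping argument closes. Because the underlying space degenerates --- it develops a long neck as the glued-in ALE piece shrinks --- I would work in weighted H\"older spaces adapted separately to the outer region near $M\setminus\{p\}$ and to the inner region given by the rescaled Burns--Simanca model, matching the weights across the neck. This calls for the mapping theory of the Lichnerowicz operator on each piece: on $M$ its kernel is exactly the Lie algebra of holomorphy potentials of $\mathrm{Aut}(M,L)$, while on the ALE model one needs the bounded and suitably decaying solutions. Splicing together the corresponding deficiency spaces yields the desired parametrix for $\mathcal{L}_\epsilon$.

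The step I expect to be the main obstacle is the finite-dimensional obstruction coming from the cokernel, namely controlling the projection of the glued error onto the holomorphy potentials; this is exactly where the hypotheses that $T$ is \emph{maximal} and that $p$ is \emph{fixed} by $T$ become essential. Blowing up at a $T$-fixed point keeps the exceptional divisor $T$-invariant and lets the generators of $\mathfrak{t}$ lift to $\mathrm{Bl}_p M$, so the torus persists; the linearized action of $T$ at $p$ sits inside $U(n)$ and is therefore compatible with the $U(n)$-invariant model $\eta$. Maximality of $T$ ensures that no larger symmetry contributes an additional unmatched obstruction, and for a single fixed point the balancing conditions that obstruct gluing at general configurations of points are automatically satisfied. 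I would then verify that the extremal field of $\omega_\epsilon$ converges to that of $\omega$ and that the reduced obstruction map, restricted to the orthogonal complement of $\mathfrak{t}$, is invertible for small $\epsilon$; granting this non-degeneracy, the fixed-point argument produces a genuine extremal metric in $c_1(\pi^*L-\epsilon E)$.
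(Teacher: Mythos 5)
Your proposal and the paper take genuinely different routes to this statement. The paper does not reprove the gluing theorem at all: it takes Theorem 2.1 of \cite{APS} as a black box, and the entire content of its proof is the verification that the hypotheses of that theorem hold in the present setting --- namely (i) that for $K=T_{\mathbf{R}}$ a compact maximal torus in the isometry group of a suitably chosen extremal metric $\omega$, every $K$-invariant holomorphic hamiltonian vector field already lies in $\mathfrak{k}$, and (ii) that the extremal vector field $J\nabla S(\omega)$ lies in $\mathfrak{k}$, which follows from Calabi's theorem \cite{Cal85} that it is central in the Lie algebra of Killing fields. You instead set out to reprove the Arezzo--Pacard--Singer theorem from scratch by the gluing and perturbation method. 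Your outline (Burns--Simanca ALE model, weighted H\"older spaces adapted to the degenerating neck, inversion of the Lichnerowicz operator modulo holomorphy potentials, solving simultaneously for the pair $(\phi,\xi)$) is a faithful description of how \cite{APS} and \cite{AP06} actually proceed, so the route is legitimate in principle; what it would buy is independence from the citation, at the cost of carrying out a large amount of analysis that the proposal only asserts.

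Judged as a proof, however, the proposal has a concrete gap precisely at the point the paper is careful about. The sentence ``maximality of $T$ ensures that no larger symmetry contributes an additional unmatched obstruction'' is the whole game, and it requires two precise inputs. First, the obstruction space for the $K$-equivariant problem is the space of $K$-invariant holomorphy potentials, and one must show that this reduces to $\mathfrak{k}$ (plus constants); this is exactly where maximality of the compact torus enters, and it is a statement to be proved, not a slogan. Second, and not addressed anywhere in your proposal, one must know that the extremal vector field of $\omega$ itself lies in $\mathfrak{k}$: otherwise it need not vanish at $p$, need not lift to the blowup, and the $T$-equivariant ansatz does not even contain the model metric you are trying to perturb. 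This is supplied by Calabi's structure theorem, which places $J\nabla S(\omega)$ in the center of the isometry algebra and hence in every maximal torus. Without these two verifications the finite-dimensional obstruction analysis you describe cannot be closed, so you should either supply them or, as the paper does, reduce directly to \cite{APS} Theorem 2.1 after checking its hypotheses.
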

\begin{proof}
	This follows from \cite{APS} Theorem 2.1. Indeed we
	can choose an extremal metric $\omega$ on $M$ such that the
	isometry group of $\omega$ contains a compact maximal torus
	$T_\mathbf{R}$, which is contained in the
	complex torus $T$. In the notation of \cite{APS}
	we let $K=T_\mathbf{R}$,
	and let $\mathfrak{k}$ be its Lie algebra. Since $K$ is a
	maximal torus, any $K$-invariant holomorphic hamiltonian vector
	field lies in $\mathfrak{k}$. Moreover if we write $S(\omega)$ for the 
	scalar curvature then by Calabi's theorem~\cite{Cal85} the vector field
	$J\nabla S(\omega)$ lies in the center of the Lie algebra of Killing fields, so it also
	 lies in $\mathfrak{k}$.
	 This allows us to apply \cite{APS}
	Theorem 2.1, and we get the stated result.
\end{proof}

\section{Proof of Theorem \ref{thm:main}}
Let us suppose that $M$ admits an extremal metric in $c_1(L)$ and 
choose a maximal torus $T\subset \mathrm{Aut}(M,L)$. From
Theorem~\ref{thm:semistab} we know that 
if $(\mathcal{X},\mathcal{L})$ is a test-configuration for
$(M,L)$ compatible with $T$, then the relative Futaki invariant
satisfies
$F_T(\mathcal{X})\geqslant0$. Suppose
then that $F_T(\mathcal{X})=0$. 

We can assume that $M\subset\mathbf{P}(V)$, where $V=H^0(M,L)^*$.
Moreover the torus $T$ acts on $\mathbf{P}(V)$, preserving $M$. In addition there
is an extra $\mathbf{C}^*$-action $\alpha$ on $\mathbf{P}(V)$, commuting with the
$T$-action and such that the flat closure of the family $t\mapsto
\alpha(t)\cdot M$ across $t=0$ is the test-configuration $\mathcal{X}$.
Let us write $(M_0,L_0)$ for the central fiber of the
test-configuration. Then we have both $\alpha$ and the torus $T$ acting
on $(M_0,L_0)$. 
By twisting the action on the total space by the orthogonal projection
of $\alpha$ onto $T$ (which does not change the relative
Futaki invariant), we can assume that
$\langle\alpha, T\rangle=0$. In this case
\[ F_T(\mathcal{X},\mathcal{L}) = F(M_0,L_0,\alpha).\]
We now look at the weight decomposition under
$\alpha$ given by
\[ V = \bigoplus_i V_{m_i},\]
where $m_0 < m_1 <\ldots < m_L$ for some $L>0$, and consider the least
$l\geqslant 0$ such that 
\[ \mathrm{red}(M_0)\subset \mathbf{P}\big(\bigoplus_{i\leqslant l}
V_{m_i}\big).\]
It is proved in \cite{Sto08} section 3 that if $l=0$, so that $\alpha$
acts trivially on $\mathrm{red}(M_0)$, then
either $\mathcal{X}$ is a product test-configuration, or
$F(M_0,L_0,\alpha)>0$, which is a contradiction. On the other hand, if
$l>0$, then consider the repulsive fixed point set 
\[ M_0' = \mathrm{red}(M_0)\cap \mathbf{P}(V_{m_l}).\]
The set of points $p\in M$ for which the limit 
\[ q = \lim_{t\to 0} \alpha(t)p \]
is in $M_0'$ is precisely
\[ M' = M \cap \mathbf{P}\left(\bigoplus_{i\geqslant l} V_{m_i}\right).\]
This is a closed $T$-invariant set, so it contains a point $p$ fixed by
$T$. To see this, we can take a basis of $\mathbf{C}^*$-actions
$\beta_i$ generating the torus $T$, and then given any point $p$ in $M'$
we can inductively move it to a fixed point of $\beta_i$ by taking the
limit of $\beta_i(t)p$ as $t\to 0$. Doing this for each $i$, we end up with
a fixed point of $T$.
The corresponding limit $q$ will then be a $T$-invariant, repulsive
fixed point of $\alpha$ in $\mathrm{red}(M_0)$.

Letting $Z\subset \mathcal{X}$ 
be the closure of the orbit of $p$ under $\alpha$, we obtain a
test-configuration 
\begin{equation*}
(\widehat{\mathcal{X}}, \widehat{\mathcal{L}}) =
(\mathrm{Bl}_Z\mathcal{X}, \phi^*\mathcal{L} - \epsilon E)
\end{equation*}
for the polarised manifold $(\mathrm{Bl}_p M,
\phi^*L - \epsilon E)$, where $\phi\!: \widehat{\mathcal{X}}\to\mathcal{X}$ is the
blowdown. The only nontrivial thing to check is flatness of
the composition $\pi\circ \phi\!:\widehat{\mathcal{X}}\to \mathcal{X}\to
\mathbf{C}$. This holds because blowing up $Z \subset \mathcal{X}$ does not
introduce new associated points (i.e. embedded schemes) of
$\mathcal{X}$, only the Cartier exceptional divisor $E$ (for details see
the proof of Proposition 2.13 of \cite{Sto08}).     

For suitably small $\epsilon > 0$ the
test-configuration $(\widehat{\mathcal{X}}, \widehat{\mathcal{L}})$ will
have negative Futaki invariant, and in fact it
will even have negative Futaki invariant relative to $T$. This follows
from the lemma below and its corollary. 

At the same time from Theorem~\ref{thm:APS} we know that 
$\mathrm{Bl}_pM$ admits an extremal metric in the class $c_1(\phi^*L -
\epsilon E)$ for 
suitably small $\epsilon$ since $p$ is fixed by the torus $T$, which is
a maximal torus of automorphisms of $M$. This contradicts
Theorem~\ref{thm:semistab}, and completes the proof of the main theorem.

\begin{lem} Let $(\mathcal{X},\mathcal{L})$ 
	be a test-configuration for $(M,L)$
	compatible with a torus $T$ of automorphisms, and suppose
	that the induced action $\alpha$ on the central fiber satisfies
	$\langle\alpha,T\rangle = 0$. Let
	$\widehat{\mathcal{X}}$ be given by the blowup of a
	$T$-invariant section as described above. Then 
	\begin{equation}
		F(\widehat{\mathcal{X}},\widehat{\mathcal{L}}) = 
		F(\mathcal{X},\mathcal{L}) + \left(\lambda(q) -
		\frac{b_0}{a_0}\right)
	\frac{\epsilon^{n-1}}{2(n-2)!} + O(\epsilon^n),
	\end{equation}
	and
	\[\langle\hat{\alpha}, \hat{T}\rangle = O(\epsilon^n),\]
	where we use the $\mathbf{Q}$-polarization
	$\widehat{\mathcal{L}}=\phi^*\mathcal{L} - \epsilon E$
	on $\widehat{\mathcal{X}}$ for some small rational $\epsilon
	>0$, and $\hat{\alpha}, \hat{T}$ are the actions of
	$\alpha$ and $T$ lifted to the blowup. 
	It follows that the relative Futaki invariants
	satisfy
	\[
	F_{T}(\widehat{\mathcal{X}},\widehat{\mathcal{L}})
	= F_T(\mathcal{X},\mathcal{L}) +  
	\left(\lambda(q) - \frac{b_0}{a_0}\right)
	\frac{\epsilon^{n-1}}{2(n-2)!} +
	O(\epsilon^n).\]
		Here $\lambda(q)$ is the weight of $\alpha$ on the fiber
		$L_{0,q}$, 
	and $a_0, b_0$ are defined by the expansions of the dimension
	and weight on $H^0(M_0,L^k_0)$ calculated at the central fiber
	of $\mathcal{X}$ as usual:
	\[ \begin{aligned}
		d_k &= a_0k^n + a_1k^{n-1} + \ldots, \\
		w_k &= b_0k^{n+1} + b_1k^n + \ldots.
	\end{aligned}\]
\end{lem}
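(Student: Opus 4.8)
The plan is to reduce the entire statement to understanding how blowing up the $T$-fixed point $q$ deforms the dimension and weight polynomials of the central fibre, since both $F$ and the inner product are extracted from these. Write $\widehat{L}_0 = \phi^*L_0 - \epsilon E$ for the induced $\mathbf{Q}$-polarization on the central fibre $\mathrm{Bl}_q M_0$. For $k$ with $k\epsilon\in\mathbf{Z}$, the sections of $\widehat{L}_0^k$ are exactly the sections of $L_0^k$ vanishing to order $k\epsilon$ at $q$, so the change is governed entirely by the jet quotient at $q$. For the dimension, the number of conditions imposed by vanishing to order $k\epsilon$ at a smooth point in dimension $n$ is $\binom{k\epsilon+n-1}{n}$, whence $\hat d_k = d_k - \binom{k\epsilon+n-1}{n}$. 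Expanding the binomial, with $\frac{n(n-1)}{2\,n!}=\frac{1}{2(n-2)!}$, gives $\hat a_0 = a_0 - \epsilon^n/n!$ and $\hat a_1 = a_1 - \frac{\epsilon^{n-1}}{2(n-2)!}$.

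For the weight I use that $\alpha$ fixes $q$, so it acts on the jet quotient, whose associated graded is $\bigoplus_{0\le j<k\epsilon} L_{0,q}^k\otimes\mathrm{Sym}^j T_q^*M_0$. The weight of $\alpha$ splits as $k\lambda(q)$ times $\binom{k\epsilon+n-1}{n}$, plus the weight of $\alpha$ on the symmetric powers $\mathrm{Sym}^j T_q^*M_0$. The first piece I expand exactly as for the dimension; the second is a sum over $j<k\epsilon$ of a polynomial of degree $n$ in $j$, hence a polynomial of degree $n+1$ in $k\epsilon$ whose $k^n$-coefficient is $O(\epsilon^n)$. This yields $\hat b_0 = b_0 - \lambda(q)\epsilon^n/n! + O(\epsilon^{n+1})$ and $\hat b_1 = b_1 - \lambda(q)\frac{\epsilon^{n-1}}{2(n-2)!} + O(\epsilon^n)$. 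Substituting into $F = \frac{\hat a_1}{\hat a_0}\hat b_0 - \hat b_1$ and expanding to order $\epsilon^{n-1}$ produces exactly the term $\big(\lambda(q)-\frac{b_0}{a_0}\big)\frac{\epsilon^{n-1}}{2(n-2)!}$, giving the first displayed formula.

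The estimate $\langle\hat\alpha,\hat T\rangle = O(\epsilon^n)$ follows by running the same jet analysis on $\mathrm{Tr}(\hat A_k\hat B_k) - \mathrm{Tr}(\hat A_k)\mathrm{Tr}(\hat B_k)/\hat d_k$ for each generator $\beta_i$ of $T$. Since $q$ is $T$-fixed, every correction term — the jet trace of $A_kB_k$ (leading term $k^2\lambda(q)\lambda_{\beta_i}(q)\binom{k\epsilon+n-1}{n}$), the products of the jet weights with $\mathrm{Tr}(B_k)$ and $\mathrm{Tr}(A_k)$, and the expansion of $1/(d_k-N_k)$ where $N_k=\binom{k\epsilon+n-1}{n}$ — contributes to the $k^{n+2}$-coefficient only at order $\epsilon^n$. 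As $\langle\alpha,\beta_i\rangle = 0$ by hypothesis, this gives $\langle\hat\alpha,\hat\beta_i\rangle = O(\epsilon^n)$ for each $i$, and hence $\langle\hat\alpha,\hat T\rangle = O(\epsilon^n)$. The relative formula then follows by substituting into $F_T(\hat\alpha) = F(\hat\alpha) - \sum_i \frac{\langle\hat\alpha,\hat\beta_i\rangle}{\langle\hat\beta_i,\hat\beta_i\rangle}F(\hat\beta_i)$: since $\langle\hat\beta_i,\hat\beta_i\rangle = \langle\beta_i,\beta_i\rangle + O(\epsilon^n)$ stays bounded below and $F(\hat\beta_i)$ stays bounded, the correction sum is $O(\epsilon^n)$, and combined with $F_T(\mathcal{X})=F(\mathcal{X})$ (valid because $\langle\alpha,T\rangle=0$) this yields the last display.

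The step I expect to be the main obstacle is the simultaneous bookkeeping in the two parameters $k$ and $\epsilon$: I must check carefully that the symmetric-power weights and all the cross terms in the inner product genuinely enter only at order $\epsilon^n$, so that they do not contaminate the $\epsilon^{n-1}$ coefficient $\lambda(q)-b_0/a_0$ that carries the geometric content. Everything else is routine expansion of Hilbert and weight polynomials.
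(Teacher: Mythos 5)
Your proposal reaches the right formulas, but it rests on two assumptions that fail in the situation the lemma is actually used for, and the paper's proof is almost entirely devoted to handling their failure. You assume (i) that $q$ is a smooth point of $M_0$, so that vanishing to order $k\epsilon$ at $q$ imposes $\binom{k\epsilon+n-1}{n}$ conditions and the jet quotient has associated graded $\bigoplus_{j<k\epsilon}L_{0,q}^k\otimes\mathrm{Sym}^jT_q^*M_0$; and (ii) that the central fibre of $\widehat{\mathcal{X}}$ is $\mathrm{Bl}_qM_0$ with $H^0(\widehat{\mathcal{L}}_0^k)$ equal to the sections of $L_0^k$ vanishing to order $k\epsilon$ at $q$. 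In the main theorem $q$ is a repulsive fixed point in the central fibre of a non-product test-configuration and is typically a singular point of $M_0$; then the length of $\mathcal{O}_{M_0}/\mathcal{I}_q^{k\epsilon}$ is \emph{not} the smooth-point binomial, the weight of $\alpha$ on this fat-point module is not computable from symmetric powers of a cotangent space, and flatness of $\widehat{\mathcal{X}}\to\mathbf{C}$ forces the central fibre to acquire an extra component $P$ glued to $\mathrm{Bl}_qM_0$ along the exceptional divisor (Donaldson's example: an ordinary double point forces $P\cong\mathbf{P}^3$ glued along a quadric). Your jet count at $q$ therefore does not compute $\hat d_k$ or $\hat w_k$.

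The paper's route around this is the real content of the proof: the dimension polynomial $\hat d_k$ is obtained not at the central fibre but from flatness plus Riemann--Roch on the \emph{generic} fibre $\mathrm{Bl}_pM$, where $p\in M$ genuinely is a smooth point (this is where your binomial coefficients are secretly valid); the weight is then computed on the actual central fibre via the restriction sequence $0\to H^0_P(\mathcal{I}_{E'}^{k\epsilon}\widehat{\mathcal{L}}_{0}^k|_P)\to H^0(\widehat{\mathcal{L}}_0^k)\to H^0_{\widehat M_0}(L_0^k-k\epsilon E')\to 0$, and one checks that the unknown quantities --- the excess length of $\mathcal{O}_{k\epsilon q}$ over the smooth count, which equals $h^0_P$, and the weight $w(\mathcal{O}_{k\epsilon q})=c_0(k\epsilon)^{n+1}+c_1(k\epsilon)^n+\cdots$ --- either cancel against the contribution of $P$ (because $\alpha$ acts on the $P$-sections with pure weight $k\lambda(q)$) or enter only at order $\epsilon^n$. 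Without this cancellation argument your derivation of $\hat b_0,\hat b_1$, and hence of the $\epsilon^{n-1}$ coefficient, is not justified; the same gap propagates to your inner-product estimate, which relies on the same exact sequences. So the answer is correct but the proof as written has a genuine gap precisely at the point where the geometry of the singular central fibre enters.
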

\begin{proof}
The central fibre of $\widehat{\mathcal{X}}$ will not in general be
isomorphic to $\widehat{M}_0 := \mathrm{Bl}_{q} M_0$. In fact it will
contain another large component $P$ glued to $\widehat{M}_0$ along the
exceptional divisor $E'$ for the morphism $\widehat{M}_0 \to
M_0$, as we now explain. 

By \cite{hart}, II Corollary 7.15, there is a closed immersion
$\widehat{M}_0 \hookrightarrow \widehat{\mathcal{X}}_0$ induced by the
closed immersion $M_0 \subset \mathcal{X}$ under blowing up $Z$. Let
$\mathcal{I}_q \subset \mathcal{O}_{M_0}$ denote the ideal sheaf of $q
\in M_0$. By the algebraic definition of blowing up we have
$\widehat{M}_0 \cong \mathrm{Proj} \bigoplus_{k \geq 0}
\mathcal{I}^k_q$. On the other hand the generic fibre of
$\widehat{\mathcal{X}}$ is $\mathrm{Proj} \bigoplus_{k \geq 0}
\mathcal{I}^k_p$, where $\mathcal{I}_p$ is the ideal of the smooth point
$p \in M$. Thus by the numerical criterion for flatness when the
Hilbert-Samuel polynomial for $p \in M$ is larger that that of $q \in
M_0$ (i.e. when $q$ is singular enough as a point of $M_0$) there will
be an additional component $P$ in the central fibre, given by the
closure of $\widehat{\mathcal{X}}_0\setminus\widehat{M}_0$. A simple
example has been suggested by S. Donaldson: when $q$ is an isolated
threefold ordinary double point inside the central fibre one has $P
\cong \mathbf{P}^3$ glued in along a smooth quadric. Note that this is
different from the situation described in \cite{jag} section 2, where
the central fibre of the original test configuration is smooth
(isomorphic to $M$), but one blows up $0-$cycles instead of just a point.
In any case the restriction $\widehat{\mathcal{L}}_{0|_{\widehat{M}_0}}$
is just $\phi^*L_0 - \epsilon E'$.

Taking this information into account we now compute the Donaldson-Futaki
invariant for the action $\alpha$ on the central fiber
$\widehat{\mathcal{X}}_0$. In the
calculations that follow $\epsilon$ is a fixed positive rational number,
and we tacitly restrict to those $k \gg 1$ for which $\epsilon k$ is an
integer. We also suppress pullbacks like $\pi^*$ or $\phi^*$ when this
causes no confusion. By flatness, using the Riemann-Roch theorem we have 
\begin{equation}\label{eq:RR}
	\begin{aligned}
h^0(\widehat{\mathcal{X}}_0, \widehat{\mathcal{L}}^k_0) &= h^0(\mathrm{Bl}_p M, L^k - k\epsilon E)\\
&= h^0(M, L^k) - \frac{\epsilon^n}{n!}k^n -\frac{\epsilon^{n-1}}{2(n-2)!}k^{n-1} + \ldots.
\end{aligned}
\end{equation}
Using the restriction $\mathbf{C}^*$-equivariant exact sequence 
\begin{equation}\label{eq:restrict}
0 \longrightarrow
H^0_P(\mathcal{I}^{k\epsilon}_{E'}\widehat{\mathcal{L}}^k_{0|_P})\longrightarrow
H^0_{\widehat{\mathcal{X}}_0}(\widehat{\mathcal{L}}^k_0) \longrightarrow
H^0_{\widehat{M}_0}(L^k_0 - k\epsilon E')\longrightarrow 0
\end{equation}
which holds for large $k \gg 1$, we find
\begin{equation*}
\mathrm{Tr}(H^0_{\widehat{\mathcal{X}}_0}(\widehat{\mathcal{L}}^k_0)) = \mathrm{Tr}(H^0_{\widehat{M}_0}(L^{k}_0 - k\epsilon E')) + \mathrm{Tr}(H^0_P(\mathcal{I}^r_{E'}\widehat{\mathcal{L}}^k_{0|_P})).
\end{equation*}
Note that $H^0_{\widehat{M}_0}(L^{k}_0 - k\epsilon E') \cong
H^0_{M_0}(\mathcal{I}^{k \epsilon}_{q} L^{k}_0)$ so the first term in
the formula above equals $\mathrm{Tr}(H^0_{M_0}(L^k_0))
-\mathrm{Tr}(H^0(\mathcal{O}_{k \epsilon q} \otimes L^{k}_0|_q))$.
From the exact sequence
\[ 0\longrightarrow \mathcal{I}_q^{k\epsilon}L_0^k\longrightarrow
L_0^k\longrightarrow \mathcal{O}_{k\epsilon q}\otimes
L_0^k|_q\longrightarrow 0,\]
together with (\ref{eq:RR}) and (\ref{eq:restrict}) we see that the
length of the $\mathcal{O}_{M_0}-$module
$\mathcal{O}_{k\epsilon q}$ is given by 
\[h^0_{P}(\mathcal{I}^{k\epsilon}_{E'}\widehat{\mathcal{L}}^k_{0|_P}) + \frac{\epsilon^n}{n!}k^n +
	\frac{\epsilon^{n-1}}{2(n-2)!}k^{n-1} + O(k^{n-2}).\]
It follows that the weight of the action on $\mathcal{O}_{k\epsilon q}\otimes L^k|_q$ is given by 
    \[ \begin{aligned}
		w(\mathcal{O}_{k\epsilon q}\otimes L^k|_p) &= w(\mathcal
	{O}_{k\epsilon q}) + k\lambda(q)\mathrm{len}(\mathcal{O}_{k\epsilon
	q}) \\
	&= k\lambda(q)h^0_{P}(\mathcal{I}^{k\epsilon}_{E'}\widehat{\mathcal{L}}^k_{0|_P}) \\&+ \left(c_0\epsilon^{n+1} + \lambda(q)
	\frac{\epsilon^n}{n!}\right)k^{n+1} +
	\left(c_1\epsilon^n +
	\lambda(q)\frac{\epsilon^{n-1}}{2(n-2)!}\right)k^n + \ldots,
	\end{aligned}\]
where $c_0, c_1$ are given by the expansion 
	\[ w(\mathcal{O}_{k\epsilon q}) = c_0(k\epsilon)^{n+1} +
	c_1(k\epsilon)^n +
	\ldots.\]
Similarly $\mathcal{I}^{k\epsilon}_{E'}\widehat{\mathcal{L}}^k_{0|_P}
\cong \mathcal{L}_0^{k}|_q \otimes
\mathcal{I}^{k\epsilon}_{E'}\mathcal{O}(-k E)|_P$ and the action on the
latter factor has vanishing weight, so one has
\[\mathrm{Tr}(H^0_{P}(\mathcal{I}^{k\epsilon}_{E'}\widehat{\mathcal{L}}^k_{0|_P})) = k\lambda(q)h^0_{P}(\mathcal{I}^{k\epsilon}_{E'}\widehat{\mathcal{L}}^k_{0|_P}).\]
After a simple cancellation we find
\[ \begin{aligned}
		\hat{a}_0 &= a_0 + O(\epsilon^n) \\
		\hat{a}_1 &= a_1 - \frac{\epsilon^{n-1}}{2(n-2)!} \\
		\hat{b}_0 &= b_0 + O(\epsilon^n) \\
		\hat{b}_1 &= b_1 -
		\lambda(q)\frac{\epsilon^{n-1}}{2(n-2)!},
	\end{aligned}\]
where $\hat{a}_i, \hat{b}_i$ are computed on $\widehat{\mathcal{X}}$. Using the formula $F(\widehat{\mathcal{X}}) = \displaystyle{\frac{\hat{a}_1}{\hat{a}_0}}\hat{b}_0
	- \hat{b}_1$ we get
\begin{equation*}
F(\widehat{\mathcal{X}}) = F(\mathcal{X}) + \left(\lambda(q) - \frac{b_0}{a_0}\right)
	\frac{\epsilon^{n-1}}{2(n-2)!} + O(\epsilon^n).
\end{equation*}	

Now let $\beta$ be any $\mathbf{C}^*$-action in the torus $T$.
To compute the inner product $\langle \hat{\alpha}, \hat{\beta}\rangle$,
let us write $A_k, B_k$ for the infinitesimal generators of the actions
$\alpha,\beta$ on $H^0(M_0, L^k_0)$, and write $\hat{A}_k,
\hat{B}_k$ for the
	infinitesimal actions of the corresponding actions on
	$H^0(\widehat{\mathcal{X}}_0, \widehat{\mathcal{L}}^k_0)$. The inner product
	$\langle\hat{\alpha},\hat{\beta}\rangle$ is the leading order
	term in 
	\begin{equation}\label{product}
		\mathrm{Tr}(\hat{A}_k\hat{B}_k) -
		\frac{\mathrm{Tr}(\hat{A}_k) 
	\mathrm{Tr}(\hat{B}_k)}{\hat{d}_k}.
	\end{equation}
Since the actions $\alpha, \beta$ commute, we can use precisely the same
exact sequences as before to compute
\begin{align*} \mathrm{Tr}(A_kB_k) - \mathrm{Tr}(\hat{A}_k\hat{B}_k) &=
\lambda_{\alpha}(p)\lambda_{\beta}(p)(\mathrm{len}(\mathcal{O}_{
k\epsilon p}) - h^0_{P}(\mathcal{I}^{k\epsilon}_{E'}\widehat{\mathcal{L}}^k_{0|_P})) + d_0 k^{n+2}\\ &+
	\mathrm{Tr}(A'_{k\epsilon}B'_{k\epsilon}),
\end{align*}
	where $A'_{k\epsilon}$ and $B'_{k\epsilon}$ 
	are the infinitesimal generators of the
	actions $\alpha,\beta$ on $\mathcal{O}_{k\epsilon p}$. We have
	an expansion
	\[ \mathrm{Tr}(A'_{k\epsilon}B'_{k\epsilon}) = c_0' (\epsilon
	k)^{n+2} + O(k^{n+1}). \]
	So up to terms of order $\epsilon^n$, the 
	leading order term in \eqref{product} is the same as that
	in 
	\[ \mathrm{Tr}(A_kB_k) -
	\frac{\mathrm{Tr}(A_k)\mathrm{Tr}(B_k)}{ d_k},\]
	which is just $\langle\alpha,\beta\rangle = 0$. This shows
	$\langle\hat{\alpha},\hat{\beta}\rangle = O(\epsilon^n)$.
	A similar computation of the inner product on the blowup is in
	\cite{DV}.

	The statement about the relative Futaki invariants now
	follows from the definition
	\[ F_T(\widehat{\mathcal{X}},\hat{\alpha}) =
	F(\widehat{\mathcal{X}},\hat{\alpha}) - \sum_{i=1}^d\frac{\langle
	\hat{\alpha},\hat{\beta}_i\rangle}{\langle\hat{\beta}_i,
	\hat{\beta}_i\rangle}
	F(\widehat{\mathcal{X}},\hat{\beta}_i),\]
	where the $\mathbf{C}^*$-actions $\beta_i$ generate the torus
	$T$.
\end{proof}
\begin{cor} Following the notation above, if $q \in M_0$ is a repulsive fixed point for $\alpha$ then $F(\widehat{\mathcal{X}}) < F(\mathcal{X})$ for $\epsilon$ small enough.
\end{cor}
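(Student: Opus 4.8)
The plan is to deduce the corollary directly from the expansion in the Lemma, reducing it to a single numerical inequality. Since
\[
F(\widehat{\mathcal{X}}) = F(\mathcal{X}) + \left(\lambda(q) - \frac{b_0}{a_0}\right)\frac{\epsilon^{n-1}}{2(n-2)!} + O(\epsilon^n)
\]
and the factor $\frac{1}{2(n-2)!}$ is a positive constant, it suffices to establish the strict inequality $\lambda(q) < b_0/a_0$: then the coefficient of $\epsilon^{n-1}$ is negative and dominates the $O(\epsilon^n)$ remainder once $\epsilon>0$ is small enough, giving $F(\widehat{\mathcal{X}}) < F(\mathcal{X})$. So the whole content is the inequality $\lambda(q) < b_0/a_0$.

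I would first record what the two numbers mean. The ratio $b_0/a_0 = \lim_k \tfrac1k\,\mathrm{Tr}(A_k)/d_k$ is $1/k$ times the average of the weights of $\alpha$ on $H^0(M_0,L^k_0)$, that is, the mean of the rescaled weight spectrum of $\alpha$ on the homogeneous coordinate ring of $M_0$. On the other hand $\lambda(q)=-m_l$ is the $\alpha$-weight on $L_{0,q}$, and since $q$ lies in the repulsive locus $\mathrm{red}(M_0)\cap\mathbf{P}(V_{m_l})$, where $m_l$ is the largest weight occurring on $\mathrm{red}(M_0)$, this is the smallest weight value attained on the part of the fixed-point set meeting $\mathrm{red}(M_0)$. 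Thus the claim is exactly that the minimal weight lies strictly below the mean.

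The inequality $\lambda(q)<b_0/a_0$ is the heart of the matter, and the step I expect to be the real obstacle, since one must control both the non-reduced and the reducible structure of $M_0$. I would split it into a lower bound and a strictness statement. For the lower bound $b_0/a_0\ge\lambda(q)$, note that the coordinates dual to the $V_{m_i}$ with $i>l$ vanish on $\mathrm{red}(M_0)$, hence lie in $\sqrt{I(M_0)}$; by Noetherianity there is a single $N$ with $\zeta^N\in I(M_0)$ for all of these finitely many coordinates $\zeta$, so every nonzero monomial of the coordinate ring involves them to uniformly bounded total degree, and therefore every $\alpha$-weight on $H^0(M_0,L^k_0)$ is at least $k\lambda(q)-O(1)$. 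For strictness I would use the hypothesis $l>0$, which forces $M\not\subset\mathbf{P}(\bigoplus_{i\geqslant l}V_{m_i})$: otherwise the flat limit would keep $\mathrm{red}(M_0)$ inside $\mathbf{P}(V_{m_l})$ and $\alpha$ would act trivially on it. Hence a generic point of $M$ has a component of weight strictly below $m_l$, so the dominant component of $M_0$ (the flat limit of the irreducible $M$) carries a nontrivial $\alpha$-action, taking $\alpha$-weight values strictly above $\lambda(q)$ on a dense open set. This component is $n$-dimensional, so a positive fraction, of order $k^n$, of the sections have weight at least $k\lambda(q)+ck$ for some $c>0$; together with the lower bound this pushes the average above $k\lambda(q)$ by order $k$, i.e. $b_0/a_0>\lambda(q)$. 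Equivalently, the rescaled weight measures converge to a limiting measure supported in $[\lambda(q),\infty)$ that is not a point mass at $\lambda(q)$, since its dominant-component part has positive mass strictly above $\lambda(q)$, so its mean $b_0/a_0$ strictly exceeds the left endpoint. The delicate point is precisely this strictness: ruling out that the entire leading-order weight mass concentrates at the bottom value $\lambda(q)$, which is exactly why one needs to know that the dominant component is acted on nontrivially.
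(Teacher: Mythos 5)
Your reduction is exactly the paper's: by the Lemma everything hinges on the strict inequality $\lambda(q)<b_0/a_0$, and your ``lower bound plus a $k^n$-sized excess'' scheme for proving it is the same skeleton as the paper's self-contained argument (the paper also offers a second, independent proof via the Hilbert--Mumford weight of $q$ and \cite{jag}, which you do not touch). Your lower bound step is fine, and in fact more careful than the paper's about the nilpotent coordinates dual to $V_{m_i}$, $i>l$. Where you diverge is in how the excess is produced: the paper takes a lowest-weight fixed point $r\in\mathrm{red}(M_0)\cap\mathbf{P}(V_{m_0})$, where the weight on $L_0$ is $-m_0\geq\lambda+1$, and uses the ideal-sheaf exact sequence of the fat point $k\epsilon r$ together with positivity of the leading Hilbert--Samuel coefficient to exhibit $\sim c(\epsilon k)^n$ sections of weight at least $k(\lambda+1)$; you instead invoke a whole $n$-dimensional component of $M_0$ with nontrivial action.

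That divergent step has two genuine gaps. First, the implication ``a generic point of $M$ has a coordinate of weight below $m_l$, hence the dominant component of $M_0$ carries a nontrivial action'' is not valid as stated: the limit $\lim_{t\to 0}\alpha(t)p$ of any individual point lands in a single weight space $\mathbf{P}(V_{m_{j}})$, so the component containing these limits could a priori be pointwise fixed, and $\mathrm{red}(M_0)$ can have several top-dimensional components. What you actually need is a top-dimensional component not contained in $\mathbf{P}(V_{m_l})$, and the clean way to get it is the one the paper uses implicitly: since $V=H^0(M,L)^*$ the embedding is linearly non-degenerate, so $\mathrm{red}(M_0)$ meets $\mathbf{P}(V_{m_0})$, and the ($n$-equidimensional) component through such a point does the job. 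Second, the passage from ``an $n$-dimensional locus where the weight exceeds $\lambda$'' to ``at least $ck^n$ sections of $H^0(M_0,L_0^k)$ of weight at least $k\lambda+ck$'' is asserted, not proved; converting geometric weight information on a component (or at a point) into a count of sections with controlled weights is precisely the content of the paper's exact-sequence computation $0\to\mathcal{I}^{k\epsilon}_rL_0^k\to L_0^k\to\mathcal{O}_{\epsilon kr}\otimes L_0^k|_r\to 0$ together with $\mathrm{len}(\mathcal{O}_{\epsilon kr})=c(\epsilon k)^n+O(k^{n-1})$ with $c>0$. Your version could be repaired (for instance by multiplying $H^0(Z,L_0^{k/2}|_Z)$ by the $k/2$-th power of a weight-$m_0$ coordinate that is a non-zero-divisor on $Z$), but as written the key quantitative step is missing.
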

\begin{proof} It remains to prove that the highest order correction term
	\[\left(\lambda(q) - \frac{b_0}{a_0}\right)
	\frac{\epsilon^{n-1}}{2(n-2)!}\]
	 is negative. It is proved is
	\cite{jag} section 4 that, possibly after a fixed basechange of the test-configuration,
	the coefficient $\lambda(q) - \frac{b_0}{a_0}$ is integral and
	equals minus the Hilbert-Mumford weight of $q$ under the
	induced action of $\alpha$ on $\mathbf{P}(V)$. The Hilbert-Mumford criterion combined with a local computation then shows
	that the weight of such a repulsive fixed point
	must be positive (for details see the proof of Theorem 1.2 in
	\cite{Sto08}). 
	
	Alternatively we can give a self-contained proof as follows. 
	Let $M_0$ be the central fiber of our test-configuration and suppose
that $q$ is a repulsive fixed point with weight $m_l$ and also let $r$
be a point in $\mathrm{red}(M_0)\cap \mathbf{P}(V_{m_0})$, ie. a lowest weight
invariant point. Then as in the Futaki invariant calculation we have the
exact sequence
\[ 0\longrightarrow \mathcal{I}^{k\epsilon}_{r} L^k_0 \longrightarrow L^k_0
\longrightarrow \mathcal{O}_{\epsilon k r}\otimes L^k_0|_r \longrightarrow
0.\]
Write $-\lambda$ for the weight $m_l$, so $\lambda(q)=\lambda$ and
 $m_0\leqslant-\lambda-1$. 
 The weights on $L_0$ are the opposite by duality and they are all at least $\lambda$. 
 Using the notation from the proof of Theorem \ref{thm:main}, 
 from the exact sequence we have
\begin{equation}
	\label{eq:weight}\begin{aligned}
	w_k &= w(\mathcal{I}^{k\epsilon}_{r}L^k) +
	w(\mathcal{O}_{\epsilon k r})  -km_0\,\mathrm{len}(\mathcal{O}_{\epsilon k
	r}) \\
	&\geqslant k\lambda \big(d_k - \mathrm{len}(\mathcal{O}_{\epsilon kr})\big) +
	w(\mathcal{O}_{\epsilon k r}) +
	k(\lambda+1)\mathrm{len}(\mathcal{O}_{\epsilon kr}) \\
	&= k\lambda d_k + k\mathrm{len}(\mathcal{O}_{\epsilon kr}) +
	w(\mathcal{O}_{\epsilon kr}).
\end{aligned}\end{equation}
Now we need the expansions
\[\begin{gathered}
	\mathrm{len}(\mathcal{O}_{\epsilon kr}) = c(\epsilon k)^n + O(k^{n-1}) \\
	w(\mathcal{O}_{\epsilon kr}) = c'(\epsilon k)^{n+1} + O(k^n).
\end{gathered}\]
It is important here that $c > 0$. This follows from \cite{hart}, III Corollary 9.6. Then
looking at the $k^{n+1}$ term in (\ref{eq:weight}) we get
\[ b_0 \geqslant \lambda a_0 + c\epsilon^n + c'\epsilon^{n+1}. \]
When $\epsilon$ is chosen sufficiently small we get the required inequality
$ \frac{b_0}{a_0} > \lambda$.
\end{proof}

\end{document}